\documentclass[11pt]{amsart} 

\usepackage[margin=2.9cm]{geometry}             
\usepackage{color,ulem}
\usepackage{graphicx}
\usepackage{amssymb, enumerate,bm}
\usepackage[matrix,arrow,ps]{xy}
\usepackage{epstopdf, amsmath} 
\usepackage{paralist}
\newcommand{\C}{\mathbb C}

\newcommand{\R}{\mathbb R}

\newcommand{\transp}{\,^t}

\newcommand{\vnorm}[1]{{\| #1 \|}}

\DeclareMathOperator{\spanc}{span}
\newtheorem{theo}{Theorem}[section]
\newtheorem{lemma}[theo]{Lemma}

\newtheorem{prop}[theo]{Proposition}

\theoremstyle{remark}
\newtheorem{remark}[theo]{Remark}
\theoremstyle{example}
\newtheorem{example}[theo]{Example}
\theoremstyle{definition}
\newtheorem{defi}[theo]{Definition}
\numberwithin{equation}{section}

\begin{document}

\begin{abstract}
We study the family of generalized stationary discs attached to a Levi degenerate submanifold $M$ of codimension $d$ in $\C^{n+d}$. We show, under suitable geometric assumptions on $M$, that this family forms a finite dimensional real submanifold of the Banach space of analytic discs.    
\end{abstract} 
\thanks{This work was supported by the Center for Advanced Mathematical Sciences, by the Faculty of Arts and Sciences, and by a URB grant from the American University of Beirut.}

\author[Al Masri, Bertrand, Meylan, Oueidat, Zoghaib]{Mohammad Tarek Al Masri, Florian Bertrand, Francine Meylan, Lea Oueidat, Hadi Zoghaib}
\title[Generalized Stationary discs attached to degenerate submanifolds in $\C^N$.]{Generalized Stationary discs attached to degenerate submanifolds in $\C^N$.}

\subjclass[2010]{}

\keywords{}
\thanks{}

\maketitle


\section*{Introduction} 

Let $M \subset \C^{n+d}$ be a  germ of a finitely smooth  generic real submanifold  of codimension $d$ at $p,$  and let $Aut(M, p)$ be its stability group, that is, the set of germs of biholomorphisms mapping $M$ into itself and fixing $p$. Initiated by Lempert in \cite{le}, the theory of   stationary discs  attached to $M$  developed, for instance   in \cite{be-bl, be-bl-me, be-de1, be-de-la, be-me, bl, hu, tu, tu3}, turns out to be  a powerful tool to detect finite jet determination  for elements of  $Aut(M, p)$.  More precisely,  in    the case of  $\mathcal{C}^4$  Levi nondegenerate  generic real submanifolds, the study of stationary  discs   yields algebraic conditions that force   2-jet determination for elements of  $Aut(M, p)$ and more generally for germs of  CR automorphisms of $M$
 of class $\mathcal{C}^3$ and fixing $p$ (see \cite{be-bl,be-bl-me,be-me,tu3}). In the  case of Levi degenerate real hypersurfaces, by constructing {\it generalized} (or higher order) stationary discs, 
 finite jet determination for finitely smooth CR diffeomorphisms of a class of weakly pseudoconvex hypersurfaces of finite type in $\C^2$ 
  has been obtained in 
  \cite{be-de1}, while  finite jet determination  for finitely smooth CR diffeomorphisms  of a class of  generically  Levi nondegenerate 
   hypersurfaces of finite type in $\C^{n+1}$ has been established in \cite{be-de-la}.

   In the recent work \cite{al-be-mc-ou-zo},  the authors  start building a theory of    generalized stationary discs for Levi degenerate  generic real submanifold by considering   perturbations of decoupled submanifolds  in $\C^{4}.$ In the present  paper, we address this question  for Levi degenerate  submanifolds $M$  of codimension $d$ whose models are {\it rigid}, that is,  given   in suitable holomorphic   coordinates   $(z,w)\in \C^{n+d}$,  referred to as {\it canonical coordinates} in Theorem 4.3.2 \cite{ber},
  at $p=0$ by 
  \begin{equation*} 
  M= \{ \Re e  w= P(z, \bar z)+ {\rm higher \ order \ terms} \},
  \end{equation*}  where $P=(P_1,\ldots, P_d )\neq 0$,  and $P_\ell$, $\ell=1,\ldots,d,$ is a homogeneous polynomial, with no pluriharmonic terms, of degree $D_\ell$ or $P_\ell\equiv 0$.  We observe that a class of  {\it finite type} submanifolds, in the sense of Kohn and Bloom-Graham (see \cite{ber,bl-me,BG,K}), may be expressed in this form. In particular, any hypersurface of finite type in $\C^{n+1}$ is of this form.    
  In this paper, we focus on the model, denoted by 
  $M_H= \{ \Re e w= P(z, \bar z)\}$, and consider $M$ as a perturbation of $M_H$.  
The construction  of generalized stationary  discs for perturbations of a Levi degenerate model $M_H$ is mainly obstructed  by the fact that the conormal  bundle of $M_H$ is no longer totally real (see \cite{tu}). In the hypersurface case studied in \cite{be-de-la}, 
 the idea is then to consider models, referred to as {\it admissible}, that admit an initial generalized stationary disc passing through the degeneracy locus of $M_H$  at a {\it single point with a prescribed order}. The analogous condition, also called {\it admissibilty}, is introduced in Definition \ref{defiadmi} and plays a crucial role in our approach. 
 Our main result, Theorem \ref{theodiscs}, concerns the construction of a finite dimensional submanifold  of stationary discs for  a class of  finitely smooth perturbations $M$ of an admissible model $M_H$. We point out that its dimension is explicit and depends ${\it only}$ on the geometry of the model $M_H$.  
 An interesting application 
 of Theorem \ref{theodiscs} is when $M_H$ is included in the product of a Levi nondegenerate, or more generally admissible, hypersurface in $\C^{n+1}$ and $\C^{d-1}$.   
 The proof of Theorem \ref{theodiscs}  relies on the study of a singular Riemann-Hilbert type problem by means of an adapted implicit function theorem and techniques developed by Forstenri\v{c} \cite{fo}, Globevnik \cite{gl1,gl2}, and Della Sala and the second author \cite{be-de2}.

\vspace{0.5cm}
 
The paper is organized as follows. In Section  \ref{coucou}, we introduce  the notion of admissibility for a model submanifold $M_H$ and provide examples of 
such models.  In Section \ref{coucou2}, we recall the different notions needed in the rest of the paper, that is,
  the Banach spaces of analytics discs with constraints, the notion of generalized stationary discs and the corresponding Riemann-Hilbert problem. Finally, we state and prove our main result  in Section \ref{coucou3} and take this opportunity to discuss a geometric property enjoyed by the family of discs constructed in Theorem \ref{theodiscs},  and open directions related to the jet determination of CR mappings.

\section{Admissible degenerate submanifolds of  $\C^{n+d}$}{\label{coucou}}
The unit disc in $\C$ is denoted by $\Delta=\{\zeta \in \C \ | \ |\zeta|<1\}$. We denote  its boundary by $\partial \Delta$.
Let $M \subset \C^{n+d}=\C^n_z\times \C^{d}_w$ be a finitely smooth  real submanifold of real codimension $d$ through $0$ given locally by 
\begin{equation}\label{eqdeg}
\begin{cases}
r_1=\Re e  w_1- P_1(z,\overline{z})+ O(D_1+1)=0\\
 \vdots \\
r_d=\Re e  w_d - P_d(z,\overline{z})+ O(D_d+1)=0
\end{cases}
\end{equation}
where $P_\ell$, $\ell=1,\ldots,d$, is a real homogeneous polynomial of  degree $D_\ell$ written as  
$$P_\ell(z,\overline z)=\sum_{\substack{|I| + |J| = D_\ell \\ D_\ell-k_\ell\leq |I| \leq k_\ell}}\alpha^\ell_{IJ} z^I \overline z^J$$
with  $\alpha_{IJ} = \overline \alpha_{JI}$. Here $I=(i_1,\ldots,i_n)$ and $J=(j_1,\ldots,j_n)$ are multi-indices.
We choose 
$$\frac{D_\ell}{2}\leq k_\ell\leq D_\ell-1$$ in such a way that there exists $(\tilde I, \tilde J)$ with $|\tilde J |=k_\ell$ such that $\alpha_{\tilde I \tilde J}\neq 0$. Moreover, we assume that $2\leq D_1\leq D_\ell\leq D_d$, $\ell=1,\ldots,d$. Note that $P_2,\ldots,P_d$ may be identically equal to zero. We write $M=\{r=0\}$ where $r:=(r_1,\ldots,r_d)$. We also set $k_0:=\max\{k_1,\ldots,k_d\}$  and point out that this integer will play an important role in the rest of the paper.  

 We associate with $M$ its model submanifold $M_H$ given by
\begin{equation}\label{eqmod}
\begin{cases}
\rho_1=\Re e  w_1- P_1(z,\overline{z})=0\\
\vdots\\ 
\rho_d=\Re e  w_d - P_d(z,\overline{z})=0
\end{cases}
\end{equation}
We set $\rho:=(\rho_1,\ldots, \rho_d)$ and then write $M_H=\{\rho=0\}$.   

\vspace{0.5cm}

Assume that the model $M_H$ is a  quadric,  that is, with 
\begin{equation}\label{eqA_j}
P_\ell(z,\overline{z})=\transp \overline{z} A_\ell z
\end{equation} for $\ell=1,\ldots,d$ where $A_\ell$ is a Hermitian $n\times n$ matrix. In this setting,  
$M_H$ is {\it strongly Levi nondegenerate} if there exists an invertible real linear combination of the $A_\ell$'s. This condition is crucial in the construction of stationary discs attached to the quadric $M_H$ and its perturbations.  
For degenerate models, we then introduce an  invariant notion which is the analogous to the {\it strong Levi nondegeneracy}.

We define, for any $d$ real numbers $c_1,\ldots,c_d \in \R$ and $V \in \C^n\setminus\{0\}$, functions $Q_{i\overline j}, S_{\bar{i} \bar{j}}: \partial \Delta \to \C$, $i,j=1,\ldots,n$, as follows
\begin{equation}\label{eqQS}
\left\{
\begin{array}{lll} 
\zeta^{k_0}\sum_{\ell=1}^dc_\ell P_{\ell,z_i \overline{z_j}}\left((1-\zeta)V,(1-\overline{\zeta})\overline{V}\right) = \left(1 - \overline \zeta\right)^{D_1-2}Q_{i\overline j}(\zeta) \\
\\
\bar{\zeta}^{k_0}\sum_{\ell=1}^dc_\ell P_{\ell,\overline{z_i} \overline{z_j}}\left((1-\zeta)V,(1-\overline{\zeta})\overline{V}\right) = \left(1 - \overline \zeta\right)^{D_1-2}S_{\bar i \bar j}(\zeta) \\
\end{array}\right.
\end{equation}
Here we write  $\frac{\partial^2 P_{\ell}}{\partial z_i \partial \overline{z_j}}$  (resp. $\frac{\partial^2 P_{\ell}}{\partial \overline z_i \partial \overline{z_j}}$) as $P_{\ell,z_i \overline{z_j}}$ (resp. $P_{\ell,\overline{z_i} \overline{z_j}}$). 
We claim that $Q_{i\overline j}$ are holomorphic polynomials of degree at most $2k_0 + 1$, and  is divisible by $\zeta^{D_1-1}$. For clarity, we prove this claim in the special case of $V=(1,\ldots,1)$ since an arbitrary choice of $V$ can only 
increase (resp. decrease) the degree of the  first nonvanishing term (resp. the degree).
Setting $U(\zeta):=\displaystyle \zeta^{k_0}\sum_{\ell=1}^d c_\ell P_{\ell,z_i \overline{z_j}}\left(1-\zeta,1-\overline{\zeta}\right)$, we have 
$$\
\begin{array}{lll} 
U(\zeta) &=& \displaystyle\zeta^{k_0}\sum_{\ell=1}^d c_\ell \sum_{\substack{|I| + |J| = D_\ell \\ D_\ell-k_\ell\leq |I| \leq k_\ell}}\widetilde{\alpha^\ell_{IJ}} (1-\zeta)^{|I|-1} \left(1-\overline{\zeta}\right)^{|J|-1}\\
\\
& = & \displaystyle \sum_{\ell=1}^d c_\ell \sum_{\substack{|I| + |J| = D_\ell \\ D_\ell-k_\ell\leq |I| \leq k_\ell}}\widetilde{\alpha^\ell_{IJ}}(-1)^{|I|-1}\zeta^{|I|+k_0-1} \left(1-\overline{\zeta}\right)^{D_\ell-2} \\
\\
& = & \displaystyle \left(1-\overline{\zeta}\right)^{D_1-2} \sum_{\ell=1}^d c_\ell \sum_{\substack{|I| + |J| = D_\ell \\ D_\ell-k_\ell\leq |I| \leq k_\ell}}\widetilde{\alpha^\ell_{IJ}}(-1)^{|I|-1}\zeta^{|I|+k_0-1-D_\ell+D_1} \left(1-\zeta\right)^{D_\ell-D_1}.\\
\end{array}
$$
The claim then follows from the fact that $|I|+k_0-1-D_{\ell}+D_1\geq D_1-1$
since $D_1\leq D_{\ell}$ for any $\ell$  and $k_0=\max\{k_1,\ldots,k_d\}$.  
Note also that $S_{\bar i \bar j}$ are antiholomorphic polynomials since, again taking $V=(1,\ldots,1)$, we have
$$
\begin{array}{lll} 
\displaystyle \bar{\zeta}^{k_0}\sum_{\ell=1}^dc_\ell P_{\ell,\overline{z_i} \overline{z_j}}\left(1-\zeta,\overline{1-\zeta}\right) &=& \displaystyle\bar{\zeta}^{k_0}\sum_{\ell=1}^d c_\ell \sum_{\substack{|I| + |J| = D_\ell \\ D_\ell-k_\ell\leq |I| \leq k_\ell}}\widetilde{\alpha^\ell_{IJ}} (1-\zeta)^{|I|} \left(1-\overline{\zeta}\right)^{|J|-2}\\
\\
& = & \displaystyle \sum_{\ell=1}^d c_\ell \sum_{\substack{|I| + |J| = D_\ell \\ D_\ell-k_\ell\leq |I| \leq k_\ell}}\widetilde{\alpha^\ell_{IJ}}(-1)^{|I|-1} \bar{\zeta}^{k_0-|I|} \left(1-\overline{\zeta}\right)^{D_\ell-2} \\
\\
& = & \displaystyle \left(1-\overline{\zeta}\right)^{D_1-2} \sum_{\ell=1}^d c_\ell \sum_{\substack{|I| + |J| = D_\ell \\ D_\ell-k_\ell\leq |I| \leq k_\ell}}\widetilde{\alpha^\ell_{IJ}}(-1)^{|I|-1}\bar{\zeta}^{k_0-|I|} \left(1-\overline{\zeta}\right)^{D_\ell-D_1}.\\
\end{array}
$$
Finally, similarly to \cite{be-de-la}, we define the following $n\times n$ matrix valued map  
\begin{equation}\label{eqQ}
Q(\zeta)= (Q_{i\overline j}(\zeta))
\end{equation}
for $\zeta \in \partial \Delta$. This map plays an important role in this paper and in the construction of stationary discs established in Theorem \ref{theodiscs}. In case $Q(\zeta)$ is invertible for all $\zeta \in \partial \Delta$, we denote by ${\rm ind}\left(-\overline{Q^{-1}}Q\right)$ 
the winding number at the origin of the function $\zeta \mapsto {\rm det}\left(-\overline{Q^{-1}(\zeta)}Q(\zeta)\right)$, that is,
$$\frac{1}{2\pi i} \int_{\partial \Delta} \frac{\left({\rm det}\left(-\overline{Q^{-1}(\zeta)}Q(\zeta)\right)\right)'}{{\rm det}\left(-\overline{Q^{-1}(\zeta)}Q(\zeta)\right)}{\rm d}\zeta.$$  We can now define:
 \begin{defi}\label{defiadmi}
 Let $M_H$ be a model given by \eqref{eqmod}. We say that the pair $(c,V) \in \R^d\times \C^n\setminus\{0\}$ is {\it admissible} if the matrix  $Q(\zeta)$ defined in \eqref{eqQ} is invertible for all $\zeta \in \partial \Delta$. The submanifold $M_H$ is said to be {\it admissible} if it admits an admissible  pair $(c,V)$.       
 \end{defi}
For real hypersurfaces in $\C^{n+1}$, this condition already appears in \cite{be-de-la} and is crucial in the construction of generalized stationary discs and their application in the  jet determination problem.  We note that this property is invariant under holomorphic changes of coordinates that preserve the canonical form \eqref{eqmod} since they are linear changes with the 
properties given by Proposition 9.2.6 in \cite{ber}. 
\begin{remark}\label{remnondeg} The two notions of admissibility and strong Levi nondegeneracy coincide in the case of a quadric model. Indeed, 
in such a case, we have $Q(\zeta)=\zeta \sum_{\ell=1}^dc_\ell A\ell$, where $A_\ell, \ell=1,\ldots,d$ is given by \eqref{eqA_j}. So, the map $Q(\zeta)$   is invertible for all $\zeta \in \partial\Delta$ 
if and only if $\sum_{j=1}^dcjAj$ is invertible. Note also that we have $S_{\overline{ij}}\equiv 0$ for all $i,j=1\ldots,n$. 
\end{remark}

The following example is due to Bertrand and Della Sala.
\begin{example}
The hypersurface 
$$\left\{\Re e w = |z|^4+2t\Re e (z^3\overline{z})\right\} \subset \C^2$$
is admissible whenever $0<t<2/3$ and is not admissible for $t\geq 2/3$.       
\end{example}

\begin{example} As already pointed out in the introduction, if $M_H \subset \C^{n+d}$ is of the form \eqref{eqmod} with $P_1(z,\overline{z})=\transp \overline{z} A_1 z$ where $A_1$ is Hermitian invertible, and {\it any} $P_2,\ldots,P_d$ (possibly zeros),
then $M_H$ is admissible; the pair $(c_1,\ldots,c_d)=(1,0,\ldots,0)$ and any $V \in \C^n\setminus\{0\}$ being admissible.     
\end{example}

\begin{example} The model submanifold $M_H \subset \C^4$ given by 
$$P_1(z,\overline{z})=|z_1|^{D_1}+|z_2|^{D_1} \ \ \mbox{ and }\ \ P_2(z,\overline{z})=|z_1|^{D_2}+|z_2|^{D_2}$$ 
is admissible. Indeed, we have 
$$Q_{1\overline 1}=Q_{2\overline 2}=c_1(D_1/2)^2(-1)^{D_1/2-1}\zeta^{k_0+D_1/2-1}+ c_2(D_2/2)^2(-1)^{D_2/2-1}(1-\overline{\zeta})^{D_2-D_1}\zeta^{k_0+D_2/2-1} $$ and 
$$Q(\zeta)= \left(\begin{matrix}
Q_{1\overline 1} (\zeta)&  0   \\
0  &   Q_{2\overline 2}(\zeta) \\
\end{matrix}\right)$$
For  $(c_1,c_2)=(1,0)$ and $V=(1,1)$ then $Q(\zeta)$  is invertible for all $\zeta \in \partial \Delta$. 
Note, however, that for  the pair $(c_1,c_2)=(0,1)$ and $V=(1,1)$ then  $Q(\zeta)$ is noninvertible at $\zeta=1$.
\end{example}

\begin{example} Consider a decoupled model $M_H$ of the form \eqref{eqmod} with 
$$P_1(z_1,\overline{z_1})=\sum_{j=D_1-k_1}^{k_1} \alpha_j z_1^j\overline{z_1}^{D_1-j}, \ \ P_2(z_2,\overline{z_2})=\sum_{j=D_2-k_2}^{k_2} \beta_j z_2^j\overline{z_2}^{D_2-j}.$$
In that case, unless $D_1=D_2$, no such model is admissible. Indeed, if $D_1<D_2$, then for any $c_1,c_2 \in \R$ and $V\in \C^2\setminus\{0\}$, the matrix $Q(\zeta)$ is equal to     
$$ \left(\begin{matrix}
c_1\sum_{j=D_1-k_1}^{k_1} \tilde{\alpha_j} \zeta^{k_0+j-1}V_1^{j-1}\overline{V_1}^{D_1-j-1} &  0   \\
0  &  c_2(1-\zeta)^{D_2-D_1}  \sum_{j=D_2-k_2}^{k_2} \tilde{\beta_j} \zeta^{k_0+j-1}V_2^{j-1}\overline{V_2}^{D_2-j-1} \\
\end{matrix}\right)$$
and is  singular at $\zeta=1$. Although the methods in the present paper do not apply to such manifolds, we emphasize that these are treated in  \cite{al-be-mc-ou-zo}. In case $D_1=D_2$, then  $M_H$ is admissible if and only if each of the two hypersurfaces  $\{\Re e w_1=P_1(z_1,\overline{z_1})\}\subset \C^2_{(z_1,w_1)}$ and 
$\{\Re e w_2=P_2(z_2,\overline{z_2})\}\subset \C^2_{(z_2,w_2)}$  is admissible.   

\end{example}

 \section {Generalized stationary discs and the Riemann-Hilbert Problem}\label {coucou2} 
\subsection{Function spaces}
In this section, we introduce the functional spaces we need in our context.  
For an integer $k \geq 0$ and a real number $0< \alpha<1$, we set $\mathcal C^{k,\alpha}=\mathcal C^{k,\alpha}(\partial\Delta,\R)$ to be the space of real-valued functions  defined on $\partial\Delta$ of class 
$\mathcal{C}^{k,\alpha}$. This space is equipped with the  norm 
$$\|f\|_{\mathcal{C}^{k,\alpha}}=\sum_{j=0}^{k}\|f^{(j)}\|_\infty+
\underset{\zeta\not=\eta\in \partial\Delta}{\mathrm{sup}}\frac{\|f^{(k)}(\zeta)-f^{(k)}(\eta)\|}{|\zeta-\eta|^\alpha},$$
where $\|f^{(j)}\|_\infty=\underset{\partial\Delta}{\mathrm{max}}\|f^{(j)}\|$.
We write  $\mathcal C_\C^{k,\alpha} = \mathcal C^{k,\alpha} + i\mathcal C^{k,\alpha}$ and equip this space with the norm 
\begin{equation}\label{eqnorm1}
\|f\|_{\mathcal{C}_{\C}^{k,\alpha}}=
\|\Re e  f\|_{\mathcal{C}^{k,\alpha}}+\|\Im m f\|_{\mathcal{C}^{k,\alpha}}.
\end{equation}
The subspace of {\it analytic discs} in $\mathcal C_{\C}^{k,\alpha}$ is denoted by $\mathcal A^{k,\alpha}$  and consists of functions defined on $\overline{\Delta}$, holomorphic on $\Delta$ and such that their restriction on 
$\partial\Delta$ is in  $\mathcal C_\C^{k,\alpha}$.

\vspace{0.5cm}
In this paper, we  work with analytic discs that enjoy pointwise constraints in the sense that they vanish at $\zeta=1$  with a given order. For this reason, we introduce adapted functional spaces. Let $m\geq 0$ be an integer. We define $\mathcal A^{k,\alpha}_{0^m}$
as the subspace of 
$\mathcal C_{\C}^{k,\alpha}$ of functions that can be written as $(1-\zeta)^m f$, with  $f\in \mathcal A^{k,\alpha}$, endowed with the  norm 
\begin{equation}\label{eqnorm2}
\|(1-\zeta)^m f\|_{\mathcal A^{k,\alpha}_{0^m}}
=\vnorm{ f }_{\mathcal{C}_{\C}^{k,\alpha}},
\end{equation}
Finally, we denote by $\mathcal C_{0^m}^{k,\alpha}$ the subspace of $\mathcal C^{k,\alpha}$ of functions which can be written as $(1-\zeta)^m v$ with $v\in \mathcal C_\C^{k,\alpha}$, and equip this space with the norm
$$\|(1-\zeta)^m f\|_{\mathcal C_{0^m}^{k,\alpha}}=\vnorm{ f }_{\mathcal C_\C^{k,\alpha}}.$$
We point out that both $\mathcal A^{k,\alpha}_{0^m}$ and $\mathcal C_{0^m}^{k,\alpha}$ are Banach spaces. Finally, when $m=1$, we simply write $\mathcal A^{k,\alpha}_{0}$ and $\mathcal C_{0}^{k,\alpha}$.

 \subsection{Generalized stationary discs}

 Let $M=\{r=0\}\subset \C^{n+d}$ be a smooth  real submanifold of the form (\ref{eqdeg}). An analytic disc $f$ 
is {\it attached to  $M$} when $f(\partial \Delta) \subset M$. Following \cite{be-de1, be-de-la}, we define a higher order notion of stationary discs (see Lempert \cite{le} and Tumanov \cite{tu} for the standard definition):
\begin{defi}
Let $k_0>0$  be a positive integer. A holomorphic disc $f: \Delta \to \C^{n+d}$ continuous up to  $\partial \Delta$ and attached to  $M=\{r=0\}$ is a {\it $k_0$-stationary disc for $M$} if there 
exists a  holomorphic lift $\bm{f}=(f,\tilde{f})$ of $f$ to the cotangent bundle $T^*\C^{n+d}$, continuous up to 
 $\partial \Delta$ and such that, for all $\zeta \in \partial\Delta$, $\bm{f}(\zeta) \in \mathcal{N}^{k_0}M(\zeta)$ where
\begin{equation}\label{eqcon}
\mathcal{N}^{k_0}M(\zeta):= \left\{(z,w,\tilde{z},\tilde{w}) \in T^*\C^{n+d} \ | \ (z,w) \in M, (\tilde{z},\tilde{w}) \in 
\zeta^{k_0} N^*_{(z,w)} M\setminus \{0\}\right\},
\end{equation}
 where 
$$N^*_{(z,w)} M=\spanc_{\R}\{\partial r_1(z,w), \ldots, \partial r_d(z,w)\}$$ is the conormal fiber at $(z,w)$ of the submanifold $M$. Here 
$\partial r_\ell=\left(\partial_z r_\ell, \partial_w r_\ell\right)$ for $\ell=1,\ldots,d. $  
The map $\bm{f}=(f,\tilde{f})$ is called a {\it $k_0$-stationary lift for $M$} and we denote by $\mathcal{S}^{k_0}(M)$ the set of these lifts  with $f$ nonconstant. 
\end{defi}
When we do not specify $k_0$, we refer to these discs as {\it generalized stationary discs}. 
We note that an analytic disc $f$ is $k_0$-stationary for $M$ if and only if there exist $d$ real valued functions $c_1, \ldots, c_d: \partial \Delta \to \R$ with $\sum_{\ell=1}^dc_\ell(\zeta)\partial r_\ell(0)\neq 0$ for all $\zeta \in \partial \Delta$  and such that the map 
$$\zeta \mapsto \zeta^{k_0} \sum_{\ell=1}^dc_\ell(\zeta)\partial r_\ell\left(f(\zeta), \overline{f(\zeta)}\right)$$
 defined on $\partial \Delta$ extends holomorphically on $\Delta$. We also point out  that the set of  small $k_0$-stationary discs is invariant under CR automorphisms (see \cite{be-bl-me} or Proposition 2.3 \cite{be-de-la}).  
In the next example, we show that one can always construct a basic family of $k_0$-stationary discs for {\it any} $M_H$ \eqref{eqmod}.   
\begin{example}
Consider a model submanifold $M_H= \{\rho=0\}\subset \C^{n+d} $ of the form (\ref{eqmod}). 
We first note that 
\begin{equation*}
\begin{cases}
\displaystyle \partial \rho_1 = (\partial_z \rho_1, \partial_w \rho_1)=  \left(-P_{1,z_1}(z,\overline{z}),\ldots,-P_{1,z_n}(z,\overline{z}), \frac{1}{2},0,\ldots,0\right)\\
\hspace{4cm} \vdots\\
\displaystyle \partial \rho_d = (\partial_z \rho_d, \partial_w \rho_d)= \left(-P_{d,z_1}(z,\overline{z}),\ldots,-P_{d,z_n}(z,\overline{z}), 0,\ldots,0,\frac{1}{2}\right)
 \end{cases}
\end{equation*}
Set, as in Subsection 1.1, $k_0:=\max\{k_1,\ldots,k_d\}$. It is easy to show that the disc 
\begin{equation}\label{eqdisini}
\bm{f_0}(\zeta)=(\underbrace{h_0(\zeta),g_0(\zeta)}_{f_0(\zeta)},\tilde{h_0}(\zeta),\tilde{g_0}(\zeta))=\left((1-\zeta)V,g_0(\zeta),\tilde{h_0}(\zeta),\frac{c_1}{2}\zeta^{k_0},\ldots,\frac{c_d}{2}\zeta^{k_0}\right)\\
\end{equation}
where $(c_1,\ldots, c_d) \in \R^d$ and $V \in \C^n\setminus\{0\}$, is a $k_0$-stationary lift for  $M_H$. The component $g_0$ is classically determined  by (\ref{eqmod})  by means of the Hilbert transform and 
 $$\tilde{h_0}(\zeta)=\zeta^{k_0}\left(\sum_{\ell=1}^dc_\ell P_{\ell,z_1}\left((1-\zeta)V,(1-\overline{\zeta})\overline{V}\right),\ldots,\sum_{\ell=1}^dc_\ell P_{\ell,z_n}\left((1-\zeta)V,(1-\overline{\zeta})\overline{V}\right)\right).$$
\end{example}

Although this example applies to nonadmissible models, it becomes much more relevant when $M_H$ is admissible. Indeed,
our goal is to construct generalized stationary lifts by deforming both the model  $M_H$ and the initial lift $\bm{f_0}$ 
\eqref{eqdisini}. In this context, it is important to consider an admissible model $M_H$ with an initial lift $\bm{f_0}$ given by \eqref{eqdisini} where  the parameters $(c_1,\ldots,c_d) \in \R^d$ and $V\in \C^n\setminus\{0\}$  form an admissible pair for $M_H$. We recall that, as opposed to the strongly Levi nondegenerate setting, the conormal bundle of a
degenerate model $M_H$ is not totally real (see \cite{we, tu}) and thus, classical techniques to construct stationary lifts 
(see e.g. \cite{be-bl, be-bl-me}) do not apply any longer. The admissibility condition is crucial since it allows the reduction to a setting where the 
initial lift $\bm{f_0}$ is, up to a factorization by powers of $(1-\zeta)$, attached to a totally real submanifold of the cotangent bundle, and therefore can  be 
deformed to obtain nearby lifts.

 \subsection{A  Riemann-Hilbert type problem}\label{subrie}
In this subsection, we show how to describe the fibration $\mathcal{N}^{k_0}M(\zeta)$ for a submanifold $M\subset \C^{n+d}$ of the form \eqref{eqdeg}, with  $2n+2d$ real  equations. Consider first a model submanifold $M_H= \{\rho=0\}$ given by (\ref{eqmod}). In that case, we obtain explicit defining equations for the corresponding fibration. Indeed, we first point out that 
\begin{eqnarray*}
(z,w,\tilde{z},\tilde{w}) \in \mathcal{N}^{k_0}M_H(\zeta) &\Leftrightarrow&  
\left\{
\begin{array}{lll} 
\rho_1(z,w,\overline{z},\overline w)=...=\rho_d(z,w,\overline{z},\overline w)=0 \\
\\
\displaystyle \exists \ c_{\ell}: \partial \Delta\rightarrow \R, 
 (\tilde{z},\tilde{w})=\zeta^{k_0} \sum_{\ell=1}^dc_{\ell}(\zeta)\partial \rho_{\ell}\left(f(\zeta), \overline{f(\zeta)}\right)
\end{array}
\right.
\end{eqnarray*}
Due to the form of $\rho$, we have 
$$ \sum_{\ell=1}^dc_{\ell}(\zeta)\partial \rho_{\ell}(f(\zeta), \overline{f(\zeta)})=
\left(-\sum_{\ell=1}^dc_{\ell}(\zeta)P_{\ell,z_1}(z,\overline{z}),\ldots,-\sum_{\ell=1}^dc_{\ell}(\zeta)P_{\ell,z_n}(z,\overline{z}),\frac{c_1(\zeta)}{2},\ldots,\frac{c_d(\zeta)}{2}\right)$$
and so
\begin{eqnarray*}
(z,w,\tilde{z},\tilde{w}) \in \mathcal{N}^{k_0}M_H(\zeta) &\Leftrightarrow&  
\left\{
\begin{array}{lll} 
\rho_1(z,w,\overline{z},\overline w)=...=\rho_d(z,w,\overline{z},\overline w)=0 \\
\\
\tilde{z_j}=-2\sum_{\ell=1}^d\tilde{w}_{\ell}(\zeta)P_{\ell,z_j}(z,\overline{z}), \ j=1\ldots,n\\
 \\
 \zeta^{-k_0}\tilde{w_1},\ldots, \zeta^{-k_0}\tilde{w_d} \in \R.
 \end{array}
\right.
\end{eqnarray*}
It then follows that the $2n+2d$ defining equations of $\mathcal{N}^{k_0}M_H(\zeta)$ are given by
\begin{equation*}\small
\left\{
\begin{array}{lll} 

\tilde{\rho}_1(\zeta)(z,w,\tilde{z},\tilde{w}) & = &  \Re e w_1 - P_1(z,\overline z)=0\\
& \vdots & 
\\	
\tilde{\rho}_d(\zeta)(z,w,\tilde{z},\tilde{w}) & = & \Re e w_d - P_d(z,\overline z)=0\\
\\
\displaystyle \tilde{\rho}_{d+1}(\zeta)(z,w,\tilde{z},\tilde{w}) & = & \left(\tilde{z}_1+2\sum_{\ell=1}^d\tilde{w}_{\ell}P_{\ell,z_1}(z,\overline{z})\right) + 
\left(\overline{\tilde{z}_1+2\sum_{\ell=1}^d\tilde{w}_{\ell}P_{\ell,z_1}(z,\overline{z})}\right) = 0\\
\\

\tilde{\rho}_{d+2}(\zeta)(z,w,\tilde{z},\tilde{w}) & = & i\left(\tilde{z}_1+2\sum_{\ell=1}^d\tilde{w}_{\ell}P_{\ell,z_1}(z,\overline{z})\right) - 
i\left(\overline{\tilde{z}_1+2\sum_{\ell=1}^d\tilde{w}_{\ell}P_{\ell,z_1}(z,\overline{z})}\right) = 0\\ 
&\vdots&\\
\tilde{\rho}_{2n+d-1}(\zeta)(z,w,\tilde{z},\tilde{w}) & = & \left(\tilde{z}_n+2\sum_{\ell=1}^d\tilde{w}_{\ell}P_{\ell,z_n}(z,\overline{z})\right) + 
\left(\overline{\tilde{z}_n+2\sum_{\ell=1}^d\tilde{w}_{\ell}P_{\ell,z_n}(z,\overline{z})}\right) = 0\\

\\
\tilde{\rho}_{2n+d}(\zeta)(z,w,\tilde{z},\tilde{w}) & = & i\left(\tilde{z}_n+2\sum_{\ell=1}^d\tilde{w}_{\ell}P_{\ell,z_n}(z,\overline{z})\right) - 
i\left(\overline{\tilde{z}_n+2\sum_{\ell=1}^d\tilde{w}_{\ell}P_{\ell,z_n}(z,\overline{z})}\right) = 0\\
\\
\tilde{\rho}_{2n+d+1}(\zeta)(z,w,\tilde{z},\tilde{w}) & = & \displaystyle i\zeta^{-k_0}\tilde{w}_1-i\zeta^{k_0}\overline{\tilde{w}_1} = 0.\\
&\vdots& 
\\\tilde{\rho}_{2n+2d}(\zeta)(z,w,\tilde{z},\tilde{w}) & = &\displaystyle  i\zeta^{-k_0}\tilde{w}_d-i\zeta^{k_0}\overline{\tilde{w}_d} = 0.\\

\end{array}
\right.
\end{equation*}
We set $\tilde{\rho}:=(\tilde{\rho}_1,\cdots,\tilde{\rho}_{2n+2d})$. For a general submanifold $M=\{r=0\}$ of the form \eqref{eqdeg},  we denote by $\tilde{r}$ the corresponding defining functions of the fibration $\mathcal{N}^{k_0}M(\zeta)$. 

The idea of describing the fibration $\mathcal{N}^{k_0}M(\zeta)$ using real defining functions is convenient since it allows us to consider $k_0$-stationary lifts as solutions of a nonlinear 
Riemann-Hilbert type problem. More precisely, an analytic disc $\bm{f}:\Delta \mapsto T^*\C^{n+d}$ is a $k_0$-stationary 
lift for $M$ if and only if it satisfies the following Riemann-Hilbert problem
\begin{equation}\label{eqstatrh}
\tilde{r}(\zeta)(\bm{f}(\zeta))=0 \ \mbox{ for all } \zeta \in \partial \Delta.
\end{equation}
The study of this problem and the structure of its solutions relies on the values of special integers called the {\it partial indices} and the {\it Maslov index}. We briefly recall these concepts.
Let $G: \partial\Delta \to Gl_N(\C)$ be a smooth map, where  $Gl_N(\C)$ denotes the general linear group on $\C^N$. 
We consider a Birkhoff factorization 
(see Globevnik \cite{gl1}  or  Vekua \cite{ve}) of $-\overline{G^{-1}}G$ on $\partial \Delta$:
$$ -\overline{G(\zeta)^{-1}}G(\zeta)=
B^+(\zeta)
\begin{pmatrix}
	\zeta^{\kappa_1}& & & (0) \\ &\zeta^{\kappa_2} & & \\ & & \ddots & \\ (0)& & &\zeta^{\kappa_{N}}
\end{pmatrix}
B^-(\zeta),$$
where  $\zeta \in \partial \Delta$, $B^+: \bar{\Delta}\to Gl_N(\C)$ and 
$B^-: (\C \cup \infty)\setminus\Delta\to Gl_N(\C)$  
 are  smooth  maps, holomorphic on $\Delta$ and $\C \setminus \overline{\Delta}$ respectively. 
The integers $\kappa_1, \dots, \kappa_N$  are  the {\it partial indices} of 
$-\overline{G^{-1}}G$ and the {\it Maslov index} of $-\overline{G^{-1}}G$ is  their sum 
$\kappa:=\sum_{j=1}^N\kappa_j$. It is important to note (see \cite{gl2}) that
  $\kappa$ is also equal to the winding number at the origin of  the map
$$\zeta \mapsto \det\left(-\overline{G(\zeta)^{-1}}G(\zeta)\right).$$

 \section{Construction of generalized stationary discs and its consequences}\label{coucou3}

\subsection{The main result}
The construction of generalized stationary discs relies on the implicit function theorem with well adapted Banach spaces, 
which we discuss now. Let  $k\geq 0$  be an integer 
and let $0<\alpha<1$. We also consider a model  $M_H=\{\rho=0\}\subset \C^{n+d} $ of the form (\ref{eqmod}), and recall that $D_1\leq D_{\ell} \leq D_d$, $\ell=1,\ldots,d$  and $k_0=\max\{k_1,\ldots,k_d\}$.

We introduce the following subspace of analytic discs with pointwise constraints
\begin{equation}\label{eqdefY2}
Y:= \left(\mathcal A^{k,\alpha}_{0}\right)^{n+d}  \times \left(\mathcal A^{k,\alpha}_{0^{D_1-1}} \right)^n
\times \left(\mathcal A^{k,\alpha}\right)^d
\end{equation}
endowed with the product norm $\|\cdot\|$ associated with  (\ref{eqnorm1}) and (\ref{eqnorm2}). We  denote, for any real submanifold $M \subset \C^{n+d}$, the set of $k_0$-stationary lifts with pointwise constraints by
$$ \mathcal{S}^{k_0}_0(M)= \mathcal{S}^{k_0}(M) \cap Y.$$

We also define the space $X$ of defining functions we work with. The definition of this space  occurs already in previous works (see \cite{al-be-mc-ou-zo, be-de1,be-de-la}) and is strongly related to the norm introduced on $Y$. We refer to the below definition of the map 
$F$ \eqref{eqF1}   for more insight. 
Furthermore, we note that in the Levi nondegenerate case, $X$ coincides with all possible perturbations of the model $M_H$.    
Choose $\delta > 0$ large enough to ensure that $f_0\left(\overline \Delta\right)$, where $f_0$ is the disc defined in (\ref{eqdisini}), 
is contained in the polydisc ${\delta \Delta}^{n+d}\subset \C^{n+d}$.
We consider the affine Banach space $X$ of functions $r\in  \mathcal{C}^{k+3}\left(\delta\Delta
^{n+d}\right)$, where $k$ is the same as in the definition of $Y$, which can be written as
\begin{equation*}
r(z,w) = \rho(z,w) +\theta(z,\Im m w )
\end{equation*}
with $\theta=(\theta_1,\ldots,\theta_d)$ of the form 
\begin{equation}\label{eqallow}
\theta_\ell(z,\Im m w )= \sum_{|I|+ |J|=D_\ell+1} z^I \overline z^J  r_{\ell,IJ0}(z)+
\sum_{|S|=1}^{D_\ell}\sum_{|I|+ |J|=D_\ell-|S|} z^I \overline z^J (\Im m w)^{|S|}  r_{\ell,IJS}(z,\Im m w)
\end{equation}
where $r_{\ell,IJ0} \in  \mathcal C^{k+3}_\C\left(\delta\Delta^{n}\right)$ and 
$ r_{\ell,IJS} \in \mathcal  C^{k+3}_\C\left(\delta\Delta^n\times(-\delta,\delta)^d\right)$. We equip $X$ with the following norm 
\begin{equation}\label{eqnorm3}
\vnorm{ r }_{X} = \sup \vnorm{ r_{\ell,IJS} }_{\mathcal C^{k+3}}.
\end{equation}

We are now in a position to state and prove our main result. 
\begin{theo}\label{theodiscs}
Let  $M_H=\{\rho=0\} \subset \C^{n+d}$ be a model submanifold  of the form (\ref{eqmod}). 
Consider an initial stationary lift  $\bm{f_0}=\left(h_0,g_0,\tilde{h_0},\tilde{g_0}\right) \in \mathcal{S}^{k_0}_0(M_H)$ of the form \eqref{eqdisini} where $(c_1,\ldots,c_d) \in \R^d$ and $V\in \C^n\setminus\{0\}$ form an admissible pair for $M_H$. Then there exist open 
neighborhoods $U$ of $\rho$ in $X$  and $V$ of $0$  in $\R^m$, where $m={\rm ind}\left(-\overline{Q^{-1}}Q\right)+d(2k_0+1)-2n(D_1-2)$, a real number 
$\varepsilon>0$, and a map of class $\mathcal{C}^1$
$$\mathcal{F}:U \times V \to  Y $$
such that:
\begin{enumerate}[i.]
\item $\mathcal{F}(\rho,0)=\bm{f_0}$,
\item for all $r\in U$, the map 
$$\mathcal{F}(r,\cdot):V\to \left\{\bm{f} \in \mathcal{S}^{k_0}_0(\{r=0\})\ \ | \  \|\bm{f}-\bm{f_0}\|<\varepsilon\right\}$$
is one-to-one and onto.
\end{enumerate}
In particular, for any defining function  $r \in U$, the set $$\left\{\bm{f} \in \mathcal{S}^{k_0}_0(\{r=0\})\ \ | \  
\|\bm{f}-\bm{f_0}\|<\varepsilon\right\}$$
is a $\mathcal{C}^1$ real submanifold of dimension ${\rm ind}\left(-\overline{Q^{-1}}Q\right)+d(2k_0+1)-2n(D_1-2)$ of the Banach space of analytic discs $Y$.    
\end{theo}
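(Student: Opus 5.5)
We will use the implicit function theorem on a map between Banach spaces. The map encodes the Riemann–Hilbert problem \eqref{eqstatrh}, with the pointwise constraints at $\zeta=1$ factored out so that the linearized problem becomes nonsingular.

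\textbf{Step 1: the map $F$.} Define $F: X\times Y\to Z$ by $F(r,\bm f)(\zeta)=\tilde r(\zeta)(\bm f(\zeta))$ for $\zeta\in\partial\Delta$. The target $Z$ is a product of weighted spaces $\mathcal C^{k,\alpha}_{0^m}$, chosen component by component:
\begin{itemize}
\item the first $d$ equations vanish at $\zeta=1$ to order $1$, since $h=(1-\zeta)\cdot$ and $g\in\mathcal A_0$;
\item the $2n$ conormal equations vanish to order $D_1-1$, because $P_{\ell,z_j}$ has degree $D_\ell-1\ge D_1-1$ in $(z,\bar z)$ and $\tilde h\in \mathcal A_{0^{D_1-1}}$;
\item the last $d$ equations carry no weight.
\end{itemize}
The special form \eqref{eqallow} of $\theta$ is designed so that, for $\bm f\in Y$, every term of $\theta(f)$ vanishes at $\zeta=1$ to the same orders, with $\mathcal C^{k,\alpha}$ quotients. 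We will check, as in \cite{be-de1,be-de-la,al-be-mc-ou-zo}, that $F$ is well defined and of class $\mathcal C^1$; this uses $r\in\mathcal C^{k+3}$ and the standard composition lemmas of Hölder spaces. By construction, $F(r,\bm f)=0$ for $\bm f\in Y$ exactly when $\bm f\in\mathcal S^{k_0}_0(\{r=0\})$, and $F(\rho,\bm f_0)=0$.

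\textbf{Step 2: reduction of the linearization.} We compute $\partial_{\bm f}F(\rho,\bm f_0)$ acting on $\bm f'=((1-\zeta)h',(1-\zeta)g',(1-\zeta)^{D_1-1}\tilde h',\tilde g')$. After dividing each row by its weight, this becomes an operator $\bm f''\mapsto 2\Re e\,[\overline{G(\zeta)}\,\bm f'']$ on unweighted discs $(h',g',\tilde h',\tilde g')\in(\mathcal A^{k,\alpha})^{2n+2d}$, for a matrix $G$ built from the blocks of $\partial\tilde\rho$ at $\bm f_0$. The conormal rows in $h'$ involve $2\sum_\ell \tilde w_\ell P_{\ell,z_jz_i}$ and $2\sum_\ell\tilde w_\ell P_{\ell,z_j\bar z_i}$. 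Since $\tilde w_\ell=\frac{c_\ell}{2}\zeta^{k_0}$, these are exactly the quantities in \eqref{eqQS} divided by $(1-\bar\zeta)^{D_1-2}$. After the factor $(1-\zeta)/(1-\zeta)^{D_1-1}$ is absorbed, the $h'$-block is governed by $Q$ and $\overline S$, with the unimodular factors $(1-\bar\zeta)^{D_1-2}/(1-\zeta)^{D_1-2}=(-\bar\zeta)^{D_1-2}$ appearing. The resulting $G$ is block triangular, with diagonal blocks:
\begin{itemize}
\item for $g'$, essentially the identity (from $\Re e\, w$), after the $(1-\zeta)$ factor is rewritten as $\zeta$-powers times a unimodular factor;
\item for $\tilde h'$, an identity block;
\item for $h'$, the block $Q$;
\item for $\tilde g'$, the block $i\zeta^{-k_0}$.
\end{itemize}
Admissibility of $(c,V)$ is what makes $G(\zeta)$ invertible on all of $\partial\Delta$.

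\textbf{Step 3: partial indices and conclusion.} We compute the partial indices of $-\overline{G^{-1}}G$. The block triangular structure reduces this to the diagonal, which contributes:
\begin{itemize}
\item $2k_0$ from each $\tilde w_\ell$ equation, plus $1$ from each $w_\ell$ equation once the constraint $g\in\mathcal A_0$ is accounted for, giving $d(2k_0+1)$ in total;
\item ${\rm ind}(-\overline{Q^{-1}}Q)$ from the $h'$ block;
\item $-2n(D_1-2)$ from the weight shifts $(1-\zeta)^{D_1-2}$ versus $(1-\bar\zeta)^{D_1-2}$ on the $2n$ conormal equations.
\end{itemize}
This gives Maslov index $m$. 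By the results of Globevnik \cite{gl1,gl2}, if all partial indices are $\ge -1$, then the operator $\bm f''\mapsto 2\Re e[\overline G\bm f'']$ is onto $(\mathcal C^{k,\alpha})^{2n+2d}$ and its kernel has real dimension $\kappa+N=m$ in our normalization. The implicit function theorem then yields $\mathcal F$, with $V\subset\R^m$ parametrizing the kernel, and the ``in particular'' statement follows.

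\textbf{Main obstacle.} The main obstacle is proving that all partial indices are $\ge -1$, and computing the kernel dimension exactly, when the $h'$ block is the general matrix $Q$, whose partial indices are not explicit. Following \cite{be-de-la,be-de2}, we would first exploit that $Q$ is a holomorphic polynomial divisible by $\zeta^{D_1-1}$ and $S$ is antiholomorphic. We would then attempt a direct kernel computation: solve the triangular system from the bottom up, obtaining $\tilde g'$ from $\zeta^{k_0}$-reality, then $\tilde h'$, then $h'$ through $Q$. This shows directly that the kernel has dimension $m$ and that the operator is surjective, bypassing the general index theory where it fails.
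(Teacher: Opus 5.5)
Your overall framework matches the paper's. You apply the implicit function theorem to $F(r,\bm f)=\tilde r(\bm f)$ with target $(\mathcal C^{k,\alpha}_0)^d\times(\mathcal C^{k,\alpha}_{0^{D_1-1}})^{2n}\times(\mathcal C^{k,\alpha})^d$, and solve the block triangular linearization from the bottom up. However, the one nontrivial step is exactly what you defer as the ``main obstacle'': showing that the $2n\times 2n$ conormal block is onto, with kernel of dimension ${\rm ind}(-\overline{Q^{-1}}Q)-2n(D_1-2)$.

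Moreover, the structure you assert for that block in Step 2 is wrong. It does not split into an identity block on $\tilde h'$ and a $Q$ block on $h'$. For $n=1$ it is $\widetilde{G_2}=\begin{pmatrix}1&Q+S\\-i&i(Q-S)\end{pmatrix}$. The column operation \eqref{eqop} only removes $S$, and no real row operation makes $\begin{pmatrix}1&Q\\-i&iQ\end{pmatrix}$ triangular. If the block did split, your argument would actually fail. An identity block sending $\tilde h\in\mathcal A^{k,\alpha}_{0^{D_1-1}}$ into $\mathcal C^{k,\alpha}_{0^{D_1-1}}$ has partial index $0<D_1-2$ when $D_1\ge 3$, and is not onto: $\Re e\,\tilde h=\phi$ imposes conditions at $\zeta=1$ on the conjugate function of $\phi$.

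The weights also cannot be removed by ``dividing rows by $(1-\zeta)^m$'': a real function divided by $(1-\zeta)^m$ is no longer real, so you do not get a problem of the form $2\Re e[\overline G\cdot]$. The correct bookkeeping is Theorems 2.1 and 2.4 of \cite{be-de2}:
\begin{itemize}
\item surjectivity holds once the partial indices are $\ge m_j-1$;
\item the kernel dimension is $\kappa+N-\sum_j N_jm_j$, with $\kappa={\rm ind}(-\overline{Q^{-1}}Q)+2dk_0$, $N=2n+2d$ and $\sum_jN_jm_j=d+2n(D_1-1)$.
\end{itemize}
Your closing claim (``Maslov index $m$'' together with ``kernel dimension $\kappa+N=m$'') is internally inconsistent. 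Your attribution of $d(2k_0+1)$ is also off: the $\tilde g$-block contributes $2k_0+1$ per component, and the $g$-block (with $g(1)=0$) contributes $0$.

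The missing idea is Lemma~\ref{nonneg}: the partial indices of $\overline{\widetilde{G_2}^{-1}}\widetilde{G_2}$ are all $\ge D_1-1$. The paper proves this as follows.
\begin{itemize}
\item Take the factorization $-\Theta\overline{\widetilde{G_2}^{-1}}\widetilde{G_2}=\Lambda\overline\Theta$ and the last row $(\lambda_1,\mu_1,\dots,\lambda_n,\mu_n)$ of $\Theta$.
\item After the operation \eqref{eqop}, this row satisfies $\overline\zeta^{\kappa_{2n}}\transp Q\lambda=-\overline\mu+\transp S\overline\lambda$, where $\kappa_{2n}$ is the smallest partial index.
\item If $\kappa_{2n}\le D_1-2$, the left side is holomorphic and vanishes at $0$, because $\zeta^{D_1-1}$ divides $Q$. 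The right side is antiholomorphic, so both sides vanish.
\item Then $\lambda=\mu=0$, contradicting the invertibility of $\Theta$.
\end{itemize}

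Your suggested ``direct'' route could replace this lemma, but not in the order you state. First pair the $2n$ real conormal equations into $n$ complex ones, $\tilde h'+\overline S h'+(-\overline\zeta)^{D_1-1}Q\overline{h'}=\psi$. Then solve for $h'$ first: you must make the antiholomorphic part of $\psi-(-\overline\zeta)^{D_1-1}Q\overline{h'}$ vanish. This is a Toeplitz problem with antiholomorphic symbol $\zeta^{D_1-2}\overline Q$. It is onto because its adjoint is multiplication by an analytic symbol that is invertible on $\partial\Delta$, and its kernel has real dimension ${\rm ind}(-\overline{Q^{-1}}Q)-2n(D_1-2)$. Only after that is $\tilde h'$ read off. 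As written, ``this shows directly'' is asserted rather than proved, so neither the surjectivity of $\partial_2F(\rho,\bm f_0)$ nor the value of $m$ is established.
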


\begin{proof}[Proof of Theorem \ref{theodiscs}]
In a neighborhood of $(\rho,\bm{f_0})$ in 
$X \times Y$,
 we define the following map 
 \begin{equation}\label{eqF1}
 F: X \times  Y
\to  \left(\mathcal C_0^{k,\alpha}\right)^d  \times \left(\mathcal C_{0^{D_1-1}}^{k,\alpha}\right)^{2n} \times   \left(\mathcal C^{k,\alpha}\right)^d
\end{equation}
by 
\begin{equation*}
F(r,\bm{f}):=\tilde{r}(\bm{f})
\end{equation*}
in the sense of \eqref{eqstatrh}. 
We emphasize that the norms defined on the spaces $Y$ and  on the target space of $F$ are natural and well 
adapted to the study of analytic discs with pointwise constraints (see e.g. \cite{be-de2}). Consequently, the space $X$  and its 
corresponding norm \eqref{eqnorm3} have been constructed to ensure that the map $F$ is well defined, in the sense that $\tilde{r}(\bm{f})$ belongs to the target space in \eqref{eqF1}, and is of class $\mathcal{C}^1$
(see Lemma 5.1 \cite{hi-ta}, Lemma 6.1 and Lemma 11.2 \cite{gl1}, Lemma 3.3 \cite{be-de1}). 
For any fixed defining function $r$, 
 the zero set of $F(r,\cdot)$ coincides with
$\mathcal{S}^{k_0}_0(\{r=0\})$. Then the proof of Theorem \ref{theodiscs} relies on the implicit function theorem applied to the map $F$ (see e.g. p.39 \cite{gl2}). 
Let $\bm{f}\mapsto \partial_2 F(\rho,\bm{f_0})\bm{f}$ be the partial derivative of $F$ with respect to the Banach space 
$Y$ at 
$(\rho,\bm{f_0})$. We have 
\begin{equation*}
\partial_2 F(\rho,\bm{f_0})\bm{f}=2\Re e  \left[\overline{G(\zeta)}\bm{f}\right]
\end{equation*}
where $G(\zeta)$ is the following square matrix of size $2n+2d$  
$$G(\zeta):=\left({\tilde\rho}_{\overline{z}}(\bm{f_0}),
{\tilde\rho}_{\overline{w}}(\bm{f_0}),
{\tilde\rho}_{\overline{\tilde{z}}}(\bm{f_0}),{\tilde\rho}_{\overline{\tilde{w}}}(\bm{f_0})\right).$$
To prove Theorem \ref{theodiscs}, we need to show the following two properties of $\partial_2 F(\rho,\bm{f_0})$: 
\begin{enumerate}[i.]
\item The map $\partial_2 F(\rho,\bm{f_0}): Y
\to  \left(\mathcal C_0^{k,\alpha}\right)^d  \times \left(\mathcal C_{0^{D_1-1}}^{k,\alpha}\right)^{2n} \times   \left(\mathcal C^{k,\alpha}\right)^d$ is onto.
\item The kernel of $\partial_2 F(\rho,\bm{f_0})$ is of real dimension ${\rm ind}\left(-\overline{Q^{-1}}Q\right)+d(2k_0+1)-2n(D_1-2)$.
\end{enumerate}

\vspace{0.5cm}

We first focus on i.
It is more convenient to reorder the given coordinates as $(w,z,\tilde{z},\tilde{w})$. In this context, discs $\bm{f}$ are of the form $(g,h,\tilde{h},\tilde{g})$. We still denote by $G(\zeta)$ the corresponding reordered matrix, that is, 
    $$G(\zeta):=\left({\tilde\rho}_{\overline{w}}(\bm{f_0}),
{\tilde\rho}_{\overline{z}}(\bm{f_0}),
{\tilde\rho}_{\overline{\tilde{z}}}(\bm{f_0}),{\tilde\rho}_{\overline{\tilde{w}}}(\bm{f_0})\right).$$
It turns out that this matrix is block upper triangular 
\begin{equation}\label{eqG}
G(\zeta)=\left(\begin{array}{cccccccccccc}
\frac{1}{2}I_d &  & (*)\\
   & G_2(\zeta) &  \\
 (0) &   &  -i\zeta^{k_0} I_d\\
\end{array}\right),
\end{equation}
where $I_d$ is the $d\times d$ identity matrix and $G_2(\zeta)$ is the square matrix of size $2n$ given by 
\begin{equation}\label{eqG_2}
\Small
\left(\begin{matrix}
\sum_{\ell=1}^d(\zeta^{k_0}c_\ell P_{\ell,z_1\overline{z_1}}+\overline{\zeta}^{k_0}c_\ell P_{\ell,\overline{z_1}\overline{z_1}}) & \ldots &  \sum_{\ell=1}^d(\zeta^{k_0}c_\ell P_{\ell,z_1\overline{z_n}}+\overline{\zeta}^{k_0}c_\ell P_{\ell,\overline{z_1}\overline{z_n}}) & 1 & \ldots & 0 \\
i\sum_{\ell=1}^d(\zeta^{k_0}c_\ell P_{\ell,z_1\overline{z_1}}-\overline{\zeta}^{k_0}c_\ell P_{\ell,\overline{z_1}\overline{z_1}}) & \ldots & i\sum_{\ell=1}^d(\zeta^{k_0}c_\ell P_{\ell, z_1\overline{z_n}}-\overline{\zeta}^{k_0}c_\ell P_{\ell,\overline{z_1}\overline{z_n}})    & -i & \ldots & 0 \\
\vdots & \vdots & \vdots &\vdots  & \ddots & \vdots \\
\sum_{\ell=1}^d(\zeta^{k_0}c_\ell P_{\ell,\overline{z_1}z_n}+\overline{\zeta}^{k_0}c_\ell P_{\ell,\overline{z_1}\overline{z_n}}) & \ldots & \sum_{\ell=1}^d(\zeta^{k_0}c_\ell P_{\ell,z_2\overline{z_n}}+\overline{\zeta}^{k_0}c_\ell P_{\ell,\overline{z_2}\overline{z_n}})    & 0 &\ldots &  1 \\
i\sum_{\ell=1}^d(\zeta^{k_0}c_\ell P_{\ell,\overline{z_1}z_n}-\overline{\zeta}^{k_0}c_\ell P_{\ell,\overline{z_1}\overline{z_n}}) & \ldots & i\sum_{\ell=1}^d(\zeta^{k_0}c_\ell P_{\ell,z_2\overline{z_n}}-\overline{\zeta}^{k_0}c_\ell P_{\ell,\overline{z_2}\overline{z_n}})    & 0 &\ldots &  -i \\
\end{matrix}\right).
\end{equation}
Due to the form of $G$,  the surjectivity of $\partial_2 F(\rho,\bm{f_0})$ follows from the surjectivity of the linear map 
$$L_2:\left(\mathcal A^{k,\alpha}_{0}\right)^n  \times \left(\mathcal A^{k,\alpha}_{0^{D_1-1}} \right)^n \to \left(\mathcal C_{0^{D_1-1}}^{k,\alpha}\right)^{2n}$$
defined by $$L_2((1-\zeta)h,(1-\zeta)^{D_1-1}\tilde{h})=2\Re e  \left[\overline{G_2(\zeta)}((1-\zeta)h,(1-\zeta)^{D_1-1}\tilde{h})\right]$$
After column permutations, we have (still denoting  by $G_2$ the corresponding matrix)
\begin{equation*}
\Small
G_2(\zeta)=\left(\begin{matrix}
1 & \sum_{\ell=1}^d(\zeta^{k_0}c_\ell P_{\ell,z_1\overline{z_1}}+\overline{\zeta}^{k_0}c_\ell P_{\ell,\overline{z_1}\overline{z_1}}) & \ldots & 0 & \sum_{\ell=1}^d(\zeta^{k_0}c_\ell P_{lz_1\overline{z_n}}+\overline{\zeta}^{k_0}c_\ell P_{\ell,\overline{z_1}\overline{z_n}})  \\
-i & i\sum_{\ell=1}^d(\zeta^{k_0}c_\ell P_{\ell,z_1\overline{z_1}}-\overline{\zeta}^{k_0}c_\ell P_{\ell,\overline{z_1}\overline{z_1}}) & \ldots &0    &  i\sum_{\ell=1}^d(\zeta^{k_0}c_\ell P_{\ell, z_1\overline{z_n}}-\overline{\zeta}^{k_0}c_\ell P_{\ell,\overline{z_1}\overline{z_n}})  \\
\vdots &\vdots & \vdots &\vdots& \vdots \\
0& \sum_{\ell=1}^d(\zeta^{k_0}c_\ell P_{\ell,\overline{z_1}z_n}+\overline{\zeta}^{k_0}c_\ell P_{\ell,\overline{z_1}\overline{z_n}}) & \ldots & 1   & \sum_{\ell=1}^d(\zeta^{k_0}c_\ell P_{\ell,z_n\overline{z_n}}+\overline{\zeta}^{k_0}c_\ell P_{\ell,\overline{z_n}\overline{z_n}})   \\
0& i\sum_{\ell=1}^d(\zeta^{k_0}c_\ell P_{\ell,\overline{z_1}z_n}-\overline{\zeta}^{k_0}c_\ell P_{\ell,\overline{z_1}\overline{z_n}}) & \ldots & -i   & i\sum_{\ell=1}^d(\zeta^{k_0}c_\ell P_{\ell,z_n\overline{z_n}}-\overline{\zeta}^{k_0}c_\ell P_{\ell,\overline{z_n}\overline{z_n}})  \\
\end{matrix}\right)
\end{equation*}
which we write, using \eqref{eqQS}, as 
\begin{equation}\label{eqG_2fact}
\underbrace{\left(\begin{matrix}
 1 & Q_{1\bar{1}}+  S_{\bar{1}\bar{1}}& \ldots & 0 & Q_{1\bar{n}}+  S_{\bar{1}\bar{n}}  \\
   -i&    iQ_{1\bar{1}}-i  S_{\bar{1}\bar{1}} & \ldots & 0 & iQ_{1\bar{n}}-i  S_{\bar{1}\bar{n}}  \\
  \vdots  & \vdots &\vdots &   \vdots & \vdots \\
    0&   Q_{n\bar{1}}+  S_{\bar{n}\bar{1}} & \ldots &   1 & Q_{n\bar{n}}+  S_{\bar{n}\bar{n}} \\
     0&  iQ_{n\bar{1}}-  iS_{\bar{n}\bar{1}}  & \ldots & -i & 
      iQ_{n\bar{n}}-i  S_{\bar{n}\bar{n}}
    \end{matrix}\right)}_{\widetilde{G_2}(\zeta)}  \underbrace{\left(\begin{matrix}
 1&  &  & & (0) \\
       &  (1-\overline{\zeta})^{D_1-2}  &&  &  \\
       &   & \ddots   & &  \\
       &  & &1  &  \\
     (0) &  &    &  & (1-\overline{\zeta})^{D_1-2} 
    \end{matrix}\right)}_{D(\zeta)} 
\end{equation}
Since by assumption $(c_1,\ldots,c_d)$ and $V$ is an admissible pair and since $\det \widetilde{G_2}=(2i)^n\rm{det}(Q)$, 
 the matrix $\widetilde{G_2}(\zeta)$ is invertible for all $\zeta \in \partial \Delta$. The linear operator
\begin{equation*}
\widetilde{L_2}: \left(\mathcal A^{k,\alpha}_{0^{D_1-1}} \right)^{2n} \to \left(\mathcal C_{0^{D_1-1}}^{k,\alpha}\right)^{2n}
\end{equation*}
defined by $$L_2=\widetilde{L_2}\circ \overline{D}$$ is of the form considered in Theorem 2.1 \cite{be-de2} and its surjectivity is equivalent to the one of   $L_2$. 
We then need to study the matrix $ \overline{\widetilde{G_2}^{-1}}\widetilde{G_2}$. We first note that the 
matrix $\widetilde{G_2}(\zeta)$ is exactly of the form of the matrix $A$ used in Lemma 4.4 \cite{be-de-la} with $s_1=\ldots=s_n=0$. We then write
\[\overline {\widetilde{G_2}^{-1}(\zeta)} \widetilde{G_2}(\zeta) =: \frac{1}{\overline {\det \widetilde{G_2}}} A'(\zeta) = \frac{1}{\overline{(2i)^n\rm{det}(Q(\zeta))}} A'(\zeta).\]
Following the computations performed in Lemma 4.4 \cite{be-de-la}, and carrying the same notation, we obtain the following expression
\begin{equation*}
 A'(\zeta)=-(2i)^n \begin{pmatrix}

C_{2,1;1} &  a_{1,2} &  C_{3,1;1}  &  a_{1,4}&  \cdots & C_{n+1,1;1}   &  a_{1,2n} \\

c_{1,1;1}&   C_{1,2}^1  &  c_{2,1;1}  &  C_{1,2}^2  &  \cdots & c_{n,1;1} & C_{1,2}^n   \\
 
C_{2,1;2} &  a_{3,2} &  C_{3,1;2} &  a_{3,4} &  \cdots & C_{n+1,1,2}  &  a_{3,2n}\\

-c_{1,1;2} &  C_{1,3}^1   &-c_{2,1;2}    &  C_{1,3}^2  &  \cdots & -c_{n,1;2} & C_{1,3}^n  \\

\vdots  &  \vdots&   \vdots &  \vdots&  \cdots & \vdots &  \vdots \\

C_{2,1;n}&  a_{2n-1,2} &   C_{3,1;n}  &  a_{2n-1,4} &  \cdots & C_{n+1,1;n} &  a_{2n-1,2n} \\
 
 \frac{ c_{1,1;n}}{(-1)^{n+1}} &  C_{1,n+1}^1   &      \frac{c_{2,1;n}}{(-1)^{n+1}}  &  C_{1,n+1}^2  &  \cdots &     \frac{c_{n,1;n}}{(-1)^{n+1}} & C_{1, n+1}^n  \\

\end{pmatrix}.
\end{equation*}
If we  denote by $C_p$ the $p^{\rm th}$ column of $A'(\zeta)$, then performing the column operation 
\begin{equation}\label{eqop}
C_{2p} \to C_{2p}-\sum_{j=1}^{n} {S}_{\overline{j p}}C_{2j-1}
\end{equation} 
for each $p=1,\cdots,n$,
transforms $A'(\zeta)$ into
\begin{equation*}
 -(2i)^n\left(\begin{matrix}

C_{2,1;1} &  Q_{1\overline 1} \overline{\rm{det}(Q(\zeta))}&   C_{3,1;1}  & Q_{1\overline 2} \overline{\rm{det}(Q(\zeta))} &  \cdots & C_{n+1,1;1}  &  Q_{1\overline n} \overline{\rm{det}(Q(\zeta))} \\

c_{1,1;1} &   0  &  c_{2,1;1}  &  0  &  \cdots & c_{n,1;1} & 0   \\

C_{2,1;2} & Q_{2\overline 1} \overline{\rm{det}(Q(\zeta))} &  C_{3,1;2}  &  Q_{2\overline 2} \overline{\rm{det}(Q(\zeta))} &  \cdots & C_{n+1,1;2}  & Q_{2\overline n} \overline{\rm{det}(Q(\zeta))}\\

-c_{1,1;2} &  0   &-c_{2,1;2}    &  0  &  \cdots & -c_{n,1;2} & 0  \\

\vdots  &  \vdots&   \vdots &  \vdots&  \cdots & \vdots &  \vdots \\

C_{2,1;n} & Q_{n\overline 1} \overline{\rm{det}(Q(\zeta))} &   C_{3,1;n}  & Q_{n\overline 2} \overline{\rm{det}(Q(\zeta))} &  \cdots & C_{n+1,1;n}  & Q_{n\overline n} \overline{\rm{det}(Q(\zeta))} \\
 
  \frac{c_{1,1;n} }{(-1)^{n+1}}& 0  &      \frac{c_{2,1;n}}{(-1)^{n+1}}  &  0  &  \cdots &     \frac{c_{n,1;n}}{(-1)^{n+1}} & 0  \\

\end{matrix}\right).
\end{equation*} 
The surjectivity of $\widetilde{L_2}$, and thus the one of $L_2$, is then a consequence of Theorem 2.1 \cite{be-de2} and of the following lemma:
\begin{lemma}\label{nonneg}
 The partial indices of $\overline {\widetilde{G_2}^{-1}}\widetilde{G_2}$, as defined in Subsection \ref{subrie}, are greater than or equal to $D_1-1\geq D_1-2.$
\end{lemma}
\begin{proof}[Proof of Lemma \ref{nonneg}]

Denote by $\kappa_{1} \geq \ldots \geq \kappa_{2n}$ the ordered partial indices of $\overline {\widetilde{G_2}^{-1}} \widetilde{G_2}$, and let $\Lambda(\cdot)$ be the diagonal matrix map with entries $\zeta^{\kappa_{1}}, \ldots, \zeta^{\kappa_{2n}}$. We know from Lemma 5.1 \cite{gl1} that  there exists a smooth map 
$\Theta: \overline{\Delta} \to GL_{2n}(\C)$, holomorphic on $\Delta$, such that 
\begin{equation}\label{eqfact}
-\Theta \overline {\widetilde{G_2}^{-1}}\widetilde{G_2} = \Lambda \overline \Theta.
\end{equation}
Let $l = (\lambda_{1},\mu_1, \ldots, \lambda_{n},\mu_{n})$ be the last row of the matrix $\Theta$. We get the following system of $2n$ equations
\begin{equation*}
\left\{
\begin{array}{lll} 
 \sum_{k=1}^n C_{j+1,1;k} \lambda_k +  \sum_{k=1}^n (-1)^{k+1}c_{j,1;k} \mu_k &=& -\overline {\rm det(Q)} \zeta^{\kappa_{2n}} \overline \lambda_{j} \\ 
\\
 \sum_{k=1}^n a_{2k-1,2j} \lambda_k +  \sum_{k=1}^n C_{1,k+1}^j \mu_k  &=&-\overline {\rm det(Q)}  \zeta^{\kappa_{2n}} \overline \mu_{j} \\
\end{array}
\right.
\end{equation*}
with $j=1,\ldots,n$.
Performing the column operations \eqref{eqop} on that system and focusing {\it only} on even rows gives
\begin{equation*}
\left\{
\begin{aligned}
 \overline{\rm{det}(Q(\zeta))}  \sum_{k=1}^n Q_{k\overline 1}\lambda_k &=-\overline {\rm det(Q(\zeta))} \zeta^{\kappa_{2n}}  \left( \overline \mu_{1} - \sum_{k=1}^{n} {S}_{\bar{k} \bar {1}} \overline \lambda_{k}\right) \\
& \vdots & \\
  \overline{\rm{det}(Q(\zeta))}  \sum_{k=1}^n Q_{k\overline n}\lambda_k &=-\overline {\rm det(Q(\zeta))} \zeta^{\kappa_{2n}}  \left( \overline \mu_{n} - \sum_{k=1}^{n} {S}_{\bar{k} \bar {n}} \overline \lambda_{k}\right) \\  
 \end{aligned}
\right.
\end{equation*}
Recall that by assumption $Q(\zeta)$ is invertible for all $\zeta\in \partial \Delta$, and thus dividing by $\zeta^{\kappa_{2n}}\overline{{\rm det}(Q(\zeta))}$, leads to 
\begin{equation}\label{eqsys}
\left\{
\begin{aligned}
\overline{\zeta}^{\kappa_{2n}}\sum_{k=1}^n Q_{k\overline 1}\lambda_k &=-    \overline \mu_{1} + \sum_{k=1}^{n} {S}_{\bar{k} \bar {1}} \overline \lambda_{k} \\
& \vdots & \\
\overline{\zeta}^{\kappa_{2n}} \sum_{k=1}^n Q_{k\overline n}\lambda_k &=-   \overline \mu_{n} +  \sum_{k=1}^{n} {S}_{\bar{k} \bar {n}} \overline \lambda_{k} \\  
 \end{aligned}
\right.
\end{equation}
Assume by contradiction that  $\kappa_{2n}\leq D_1-2$. Then the right hand side of each of the  above equations is antiholomorphic  since $S_{\bar{i} \bar{j}}$ are antiholomorphic polynomials, while the left hand side is holomorphic since $Q_{i\overline j}$ are holomorphic polynomials divisible by $\zeta^{D_1-1}$. They must then both vanish, leading to
$$Q\left(\begin{matrix} \lambda_1 \\ \vdots \\ \lambda_n \end{matrix}\right)=0$$
which implies that $\lambda_j$, $j=1\ldots,n$, is identically equal to zero.  This implies directly from \eqref{eqsys} that  $\mu_j$, $j=1\ldots,n$ also vanishes identically, contradicting the invertibility of the matrix $\Theta$. 
\end{proof}

This proves i. 

\vspace{0.5cm}

Finally, it remains to prove ii., that is, to find the dimension of the kernel of $\partial_2 F(\rho,\bm{f_0})$. 

\begin{lemma}\label{lemker}
The real dimension of the kernel of $\partial_2 F(\rho,\bm{f_0})$ is equal to ${\rm ind}\left(-\overline{Q^{-1}}Q\right)+d(2k_0+1)-2n(D_1-2)$.
\end{lemma}
\begin{proof}
The proof relies essentially on Theorem 2.4 \cite{be-de2}. Recall that the matrix $G$ is of the form  \eqref{eqG} with $G_2$ given by  \eqref{eqG_2}. We write 
 \begin{equation}\label{eqG2}
 G(\zeta)=\underbrace{\left(\begin{array}{cccccccccccc}
\frac{1}{2}I_d &  & (*)\\
   & G_2(\zeta) &  \\
 (0) &   &  -i\zeta^{k_0} I_d\\
\end{array}\right)}_{\tilde{G}(\zeta)} \times 
\underbrace{\left(\begin{array}{cccccccccccc}
I_d &  & (0)\\
   & D(\zeta) &  \\
 (0) &   &  I_d\\
\end{array}\right)}_{\tilde{D}(\zeta)} 
\end{equation}
and we note that the kernel of the operators $$\bm{f} \mapsto \partial_2 F(\rho,\bm{f_0})\bm{f}=2\Re e  \left[\overline{G(\zeta)}\bm{f}\right]$$ 
and 
$$\left(\mathcal A^{k,\alpha}_{0}\right)^d \times \left(\mathcal A^{k,\alpha}_{0^{D_1-1}} \right)^{2n}\times \left(\mathcal A^{k,\alpha}\right)^d  \ni \bm{f} \mapsto 2\Re e  \left[\overline{\tilde{G}(\zeta)}\bm{f}\right]$$
are of the same dimension. 
Now, using the same notation as in Theorem 2.4 \cite{be-de2}, we have $r=3$, $m_1=1$ and $N_1=d$,  $m_2=D_1-1$ and $N_2=2n$, and 
$m_3=0$ and $N_3=d$. Recall that the Maslov index $\kappa$ is given by the winding number at $0$ of the function 
$\zeta \mapsto \det\left(-\overline{\tilde{G}(\zeta)^{-1}}\tilde{G}(\zeta)\right)$. 
Since $\det(\widetilde{G_2})=(2i)^n\det(Q)$, we directly obtain 
$$\kappa= 0+{\rm ind} (Q) + 2dk_0={\rm ind} (Q) + 2dk_0.$$
Thus, the dimension of the kernel of $\partial_2 F(\rho,\bm{f_0})$ is equal to
\begin{eqnarray*}
\kappa+2n+2d-\sum_{j=1}^3N_jm_j&= &{\rm ind}\left(-\overline{Q^{-1}}Q\right)+2dk_0+2n+2d-d-2n(D_1-1)\\
&=&{\rm ind}\left(-\overline{Q^{-1}}Q\right)+d(2k_0+1)-2n(D_1-2).\\
\end{eqnarray*}
 
\end{proof}

This concludes the proof of the theorem.

\end{proof}

\begin{remark} In the paper \cite{be-bl-me}, the authors impose a pointwise constraint on the $\tilde{g}$ component of the lift, namely $\tilde{g}_\ell(1)=c_\ell/2$, $\ell=1,\ldots,d$. Imposing such a condition in the present setting would imply that the real dimension of the corresponding manifold of stationary lifts is ${\rm ind}\left(-\overline{Q^{-1}}Q\right)+2k_0d-2n(D_1-2)$. This follows  from the fact that the integer $m_3$ in the proof of the above Lemma  \ref{lemker}, would be equal to $1$. In view of Remark \ref{remnondeg}, if the model $M_H$  is strongly Levi nondegenerate, then $D_1=2$, $k_0=1$ and $Q(\zeta)=\zeta \sum c_jA_j$, which  implies ${\rm ind}\left(-\overline{Q^{-1}}Q\right)=2n$ and allows us to recover the dimension stated in Theorem 3.1 \cite{be-bl-me}.
\end{remark}

\subsection{Finite jet determination of generalized stationary lifts and open questions}
Let $M_H=\{\rho=0\}$ be an admissible model.  
For an integer $\ell_0 \in \{1,\ldots,k\}$, where $k$ is large enough, we consider the linear $\ell_0$-jet map at $\zeta=1$ defined on 
$(\mathcal{A}^{k,\alpha})^{2(n+d)}$ by 
 $$\mathfrak j_{\ell_0}({\bm f})=\left ( {\bm f}(1), {\bm f}'(1), \ldots, {\bm f}^{(\ell_0)}(1)\right )\in  \mathbb C^{(n+d)(\ell_0 + 1)}$$
 We prove the following  jet determination result for stationary lifts. 
 \begin{prop}\label{propjet}
Consider a stationary lift ${\bm f_0} \in \mathcal{S}_0^{k_0}(M_H)$ of the form \eqref{eqdisini} where $(c_1,\ldots,c_d)$ and $V$ form an admissible pair for $M_H$. Then there exist an open neighborhood $U$ of $\rho$ in $X$, $\varepsilon>0$, and an integer $\ell_0\geq 1$ such that for any $r \in U$ the jet map 
$\mathfrak j_{\ell_0}$ is injective on the submanifold $\{\bm{f} \in \mathcal{S}^{k_0}_0(\{r=0\})\ \ | \  
\|\bm{f}-\bm{f_0}\|<\varepsilon\}$.     
\end{prop}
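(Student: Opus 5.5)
The plan follows the scheme of Theorem 3.1 in \cite{be-bl-me} and of the corresponding jet result in \cite{be-de-la}. The point is that, for each $r\in U$, the submanifold of lifts near $\bm{f_0}$ is the image of $V\subset\R^m$ under the $\mathcal C^1$ map $\mathcal F(r,\cdot)$ of Theorem \ref{theodiscs}. It therefore suffices to show that the composition $\mathfrak j_{\ell_0}\circ \mathcal F(r,\cdot)$ is injective on a small neighborhood of $0$, uniformly in $r$ near $\rho$.

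\textbf{Step 1: reduction to the tangent space.} The tangent space of $\mathcal S^{k_0}_0(M_H)$ at $\bm{f_0}$ is the kernel $\ker \partial_2 F(\rho,\bm{f_0})$, which has dimension $m$ by Lemma \ref{lemker}. By the implicit function theorem, $\partial_t\mathcal F(\rho,0)$ is a linear isomorphism from $\R^m$ onto this kernel. The key claim is the following: there is $\ell_0$ such that the linear map $\mathfrak j_{\ell_0}$ is injective on $\ker\partial_2 F(\rho,\bm{f_0})$. The kernel is a finite-dimensional space of analytic discs, each holomorphic on $\Delta$ and of class $\mathcal C^{k,\alpha}$ up to the boundary. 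For $\ell_0\leq k$, consider the decreasing chain of subspaces
\[
K_{\ell}=\{\bm f\in\ker\partial_2F(\rho,\bm{f_0})\ :\ \mathfrak j_\ell(\bm f)=0\}.
\]
I would show that the intersection of all $K_\ell$ is $\{0\}$. The chain then stabilises at $\{0\}$ for some $\ell_0$, because the dimension is finite.

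\textbf{Step 2: vanishing of the intersection (main obstacle).} To show $\bigcap_\ell K_\ell=\{0\}$, the elements of the kernel have to be controlled explicitly. A disc merely of class $\mathcal C^{k,\alpha}$ with vanishing jet at a boundary point need not vanish identically, so abstract arguments do not suffice. I would use the factorization $G=\tilde G\tilde D$ from \eqref{eqG2} together with the Birkhoff factorization \eqref{eqfact}. As in \cite{gl1,be-de2}, kernel elements of $2\Re e[\overline{\tilde G}\bm f]=0$ then take the form $\bm f=\Theta^{-1}\times(\text{polynomial data})$, up to the diagonal factors. More precisely, each kernel element equals $B(\zeta)p(\zeta)$, where $B$ is a fixed invertible matrix function and $p$ ranges over vectors whose components are polynomials in $\zeta$ of degree at most $\kappa_j$.

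Since the model data ($P_\ell$ polynomial, $Q$ and $S$ polynomial) are real-analytic, $\Theta$ can be chosen real-analytic up to $\partial\Delta$. Hence kernel elements are real-analytic near $\zeta=1$, and a vanishing infinite jet at $1$ forces $\bm f\equiv 0$. The upper triangular structure \eqref{eqG} also handles the $\tilde g$ and $g$ components directly: $\tilde g$ is $\zeta^{k_0}$ times a real-on-the-circle polynomial, and $g$ is determined from $h$ up to imaginary constants. This explicit, or analytic, description of the kernel is the step I expect to require the most care. In particular, the powers $(1-\zeta)^{D_1-1}$ in the components shift the jet order but do not affect injectivity.

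\textbf{Step 3: perturbation.} Once $\mathfrak j_{\ell_0}\circ\partial_t\mathcal F(\rho,0)$ is injective, it has a left inverse; equivalently, $|\mathfrak j_{\ell_0}(\partial_t\mathcal F(\rho,0)v)|\geq c|v|$ for all $v\in\R^m$. The jet map is continuous on $Y$ for $k\geq \ell_0+1$, and $\mathcal F$ is $\mathcal C^1$. So, after shrinking $U$ and $V$, $\mathfrak j_{\ell_0}\circ\mathcal F(r,\cdot)$ is a $\mathcal C^1$ map whose differential satisfies $|d(\mathfrak j_{\ell_0}\circ\mathcal F(r,\cdot))_t-\mathfrak j_{\ell_0}\partial_t\mathcal F(\rho,0)|<c/2$ on $V$, with $V$ convex. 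The mean value inequality then gives
\[
|\mathfrak j_{\ell_0}\mathcal F(r,t)-\mathfrak j_{\ell_0}\mathcal F(r,t')|\geq \tfrac c2|t-t'|,
\]
which is the required injectivity. Finally, I would choose $\varepsilon$ so small that the $\varepsilon$-ball around $\bm{f_0}$ in the solution set lies inside $\mathcal F(r,V)$, which is possible by part ii of Theorem \ref{theodiscs}.
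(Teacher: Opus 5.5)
Your proposal is correct and follows essentially the paper's route. You prove injectivity of $\mathfrak j_{\ell_0}$ on $\ker\partial_2F(\rho,\bm{f_0})$ through the block-triangular structure of $G$: first $\tilde g$, which is a polynomial of degree at most $2k_0$; then the middle block, which \eqref{eqfact} reduces to polynomial data; and finally $g$. You then pass to nearby $r$ and nearby lifts by a perturbation argument, which you spell out explicitly (the mean value inequality on a convex $V$), whereas the paper leaves it implicit.

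The real-analyticity of $\Theta$ and the chain-stabilization step are unnecessary. Since $\Theta$ is smooth and invertible up to $\partial\Delta$, and the transformed components are polynomials of degree bounded by the partial indices, a vanishing jet of order $\max\{2k_0,\max_j\kappa_j\}$ at $\zeta=1$ already forces the kernel element to vanish. This gives directly the explicit $\ell_0$ chosen in the paper.
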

Note that in the following proof we use the same notation as in the proof of Theorem \ref{theodiscs}. We also point out that the outline of the proof is now relatively standard and similar to the proofs of Lemma 5.3 \cite{be-de-la} and Proposition 3.8 \cite{be-me}.  
\begin{proof}
We set ${\ell_0}:={\rm max}\{2k_0,\kappa_{2n}\}$ where $\kappa_{2n}$ is the largest partial index of $-\overline {\widetilde{G_2}^{-1}}\widetilde{G_2}$.  Since $k$ is large enough, the restriction of the map $\mathfrak j_{\ell_0}$ to the submanifold $\mathcal{S}_0^{k_0}(M_H)$ is of class $\mathcal{C}^1$. We need to show that its differential 
$$\mathfrak j_{\ell_0}: T_{\bm f_0} \mathcal{S}^{k_0}_0(M_H) \to  \mathbb C^{(n+d)(\ell_0 + 1)}$$
is injective. Note that we have $T_{\bm f_0} \mathcal{S}^{k_0}_0(M_H)={\rm ker} \left(2\Re e \left[\overline{G}\cdot\right]\right)$. Let ${\bm f} \in T_{\bm f_0} \mathcal{S}^{k_0}_0(M_H)$ with a trivial $\ell_0$-jet, that is, $\mathfrak j_{\ell_0}(\bm f)=0$. This disc satisfies the following system of $2n+2d$ equations
\begin{equation}\label{eqker}
\overline{G}{\bm f} + G\overline{\bm f}=0.
\end{equation}
We first note that the last $d$ equations of \eqref{eqker} give 
\begin{equation}\label{eqind}
i\overline{\zeta^{k_0}} \tilde{g}(\zeta)-i\zeta^{k_0} \overline{\tilde{g}(\zeta)}=0,
\end{equation}
which directly implies  
$$\tilde{g}=\sum_{\ell=0}^{k_0-1}a_\ell \zeta^\ell+a_{k_0}\zeta^{k_0}+ \sum_{\ell=k_0+1}^{2k_0}\overline{a_{2k_0-\ell}} \zeta^\ell$$
with $a_\ell \in \C^d$ and $a_{k_0} \in \R^d$. Thus, $\tilde{g}$ is determined by its $2k_0$-jet and so $\tilde{g}\equiv 0$.
We now focus on the previous $2n$ equations in (\ref{eqker}) which, using the factorization \eqref{eqG_2fact}, are of the form
 \begin{equation*}
\overline{\widetilde{G_2}(\zeta)\tilde{D}(\zeta)}\underbrace{\left(\begin{matrix}
h_1  \\
 \tilde{h}_1\\
 \vdots \\
 h_n  \\
 \tilde{h}_n\\
\end{matrix}\right)}_{\bm h} 
+\widetilde{G_2}(\zeta)\tilde{D}(\zeta)\left(\begin{matrix}
\overline{h_1}  \\
 \overline{\tilde{h}_1}\\
 \vdots \\
 \overline{h_n}  \\
 \overline{\tilde{h}_n}\\
\end{matrix}\right) =0.
\end{equation*}
Using \eqref{eqfact}, we  write, permuting the components of  $\overline{\tilde{D}}{\bm h}$ if necessary, 
$$\Theta \overline{\tilde{D}}{\bm h} =-\Lambda \overline{\Theta \overline{\tilde{D}}{\bm h}}.$$
 It follows that the disc $\Theta \overline{\tilde{D}}{\bm h}$ satisfies equations similar to \eqref{eqind} and is then determined by its jet of order $\kappa_{2n}$. We note that since ${\bm h}$ has a trivial $\ell_0$-jet at $
\zeta=1$, the same applies to $\Theta \overline{\tilde{D}}{\bm h}$. It follows that $\Theta \overline{\tilde{D}}{\bm h}\equiv 0$ and so ${\bm h}\equiv 0$ since $\Theta 
\overline{\tilde{D}}$ is an isomorphism. 
Finally the first $d$ equations of \eqref{eqker} imply directly that $g \equiv 0$. This achieves the proof. 
\end{proof}

\vspace{0.5cm}

We end the paper with three open questions and directions related to the jet determination of CR mappings.  In \cite {be-bl-me} and \cite{be-me2},  the authors  obtain  a {\it filling property} that allows us to conclude that  $2$-jet determination  for sufficiently smooth 
CR automorphisms between Levi nondegenerate  submanifolds  holds.  We observe that, in the hypersurface case, the filling property  is  automatically satisfied for Levi nondegenerate  ones, as well as for polynomial models (see \cite{be-bl} and 
Lemma 5.4 \cite{be-de-la}), leading to finite jet determination.  At the moment, it is not clear whether the discs in the submanifold 
$\left\{\bm{f} \in \mathcal{S}^{k_0}_0(\{r=0\})\ \ | \  \|\bm{f}-\bm{f_0}\|<\varepsilon\right\}$ constructed in Theorem \ref{theodiscs} enjoy any filling property.  For a general overview of the finite jet determination problem, the reader is invited to consult the survey given in \cite{LM}.

\vspace{0.3cm}

\noindent {\bf Question 1.} In \cite{be-me}, we proved that the $1$-jet determination for stationary lifts for a nondegenerate submanifold  $M$ leads to a filling property in the conormal bundle $N^*M$ in $T^*\C^{n+d}$. In our context, we establish in Proposition \ref{propjet} a higher order jet determination for stationary lifts  and it would 
be interesting to understand whether this implies any filling property in the conormal bundle under additional assumptions on $M$. 

\vspace{0.3cm}

\noindent {\bf Question 2.}  Consider for a submanifold $M \subset \C^{n+d}$ given by \eqref{eqdeg} the evaluation map 
$\Psi: \mathcal{S}^{k_0}_0(M) \to \C^{n+d}$ defined by 
\begin{equation}\label{eqfill}
\Psi(\bm f)=f(0). 
\end{equation}
In case the model $M_H$ is strongly Levi nondegenerate with $A:=\sum_{\ell=1}^dc_\ell A_\ell$ invertible and $k_0=1$, the differential $d_{\bm f_0}\Psi: T_{\bm f_0} \mathcal{S}^1_0(M_H) \to \C^{n+d}$ at ${\bm f_0}$ given by \eqref{eqdisini}, is onto if and only if the matrix $\transp \overline{D}A^{-1}D$, where $D$ is the $n\times d$ matrix whose $\ell^{th}$ column is $A_\ell V$, is invertible (see \cite{be-bl-me,be-me2}). 
The question is then to find an analogous object in the degenerate setting. To this end, we focus once more on the strongly Levi nondegenerate case. We have already observed that $Q(\zeta)=\zeta A$ in Remark \ref{remnondeg}, and we note that  $D$ is contained in the matrix $G$ \eqref{eqG}. Indeed, $G$ can be further detailed 
\begin{equation}\label{eqG3}
G(\zeta)=\left(\begin{array}{ccccccccccccc}
\frac{1}{2}I_d & (1-\zeta)\transp D & 0 & 0\\
 0    & \zeta \transp A& I_n &   2(1-\zeta)D  \\
  0      & i\zeta \transp A & -iI_n  &  -2i(1-\zeta)D  \\
0 & 0   &  0  &  -i\zeta I_d\\
\end{array}\right),
\end{equation}
The matrix $D$ is essential in the description of the tangent space $T_{\bm f_0} \mathcal{S}^1_0(M_H)={\rm ker} \left(2\Re e [\overline{G}\cdot]\right)$, 
which is a key step in establishing the surjectivity of $d_{\bm f_0}\Psi$. In the degenerate setting, the transpose of the matrix
$$\left(\begin{matrix}
-P_{1,\overline{z_1}}\left((1-\zeta)V,(1-\overline{\zeta})\overline{V}\right)&  \ldots &  -P_{1,\overline{z_n}}\left((1-\zeta)V,(1-\overline{\zeta})\overline{V}\right)       \\
\vdots&  \ddots & \vdots       \\
-P_{d,\overline{z_1}}\left((1-\zeta)V,(1-\overline{\zeta})\overline{V}\right)&  0& -P_{d,\overline{z_n}}\left((1-\zeta)V,(1-\overline{\zeta})\overline{V}\right)    \\
\end{matrix}\right) $$
is a solid candidate for being an analogous to $D$, up to suitable powers of $(1-\zeta)$. In this vein, it would be interesting to first study  the case where the polynomials $P_1,\ldots,P_d$ share the same degree. Yet, at this stage, it seems that we are still far away from a complete description of 
the tangent space $T_{\bm f_0} \mathcal{S}^{k_0}_0(M_H)$. This is in part due to the fact that not much is known about the matrix $\Theta(\zeta)$ in the factorization \eqref{eqfact}, as opposed to the nondegenerate setting where 
$\Theta$ is explicity found  and turns out to be constant (see Equation $(2.12)$ in \cite{be-bl-me}).   
   
   \vspace{0.3cm}

\noindent {\bf Question 3.} As observed in \cite{be-me,tu,tu3}, nondefective stationary discs play an important role in the 
jet determination of CR mappings of strongly Levi nondegenerate
submanifolds. These discs are particularly relevant in the theory of wedge extension of CR maps since they enjoy an 
important filling property 
\cite{ber,ba-ro-tr,tu0}.
The nondefectivity of a stationary disc of the form \eqref{eqdisini} is algebraically characterized  in \cite{be-me}, again by 
means of the matrix $D$, and it would be  interesting to obtain a similar algebraic condition that characterizes 
the nondefectivity of such a disc for degenerate submanifolds. We address this question in a forthcoming paper.

\vspace{0.5cm}

\noindent  {\it Acknowledgments.}  This work originates from the  Summer Research Camp in Mathematics (2024) designed by the Department of Mathematics at the American University of Beirut (AUB), and that benefitted from a generous support from the Center for 
Advanced Mathematical Sciences and the Faculty of Arts and Sciences at AUB.

\vskip 1cm

{\small
\noindent Mohammad Tarek Al Masri, Florian Bertrand, Lea Oueidat, Hadi Zoghaib\\
Department of Mathematics\\\
American University of Beirut, Beirut, Lebanon\\{\sl E-mail addresses}: maa369@mail.aub.edu, fb31@aub.edu.lb, lea.oueidat@outlook.com, hsz08@mail.aub.edu\\

\noindent Francine Meylan \\
Department of Mathematics\\
University of Fribourg, CH 1700 Perolles, Fribourg\\
{\sl E-mail address}: francine.meylan@unifr.ch\\
}


\begin{thebibliography}{11111}

\bibitem{al-be-mc-ou-zo}  M. T. Al Masri, F. Bertrand, J. Mchaimech, L. Oueidat, H. Zoghaib, 
{\it Construction of stationary discs for perturbations of decoupled submanifolds in $\C^4$}, to appear in Arch. Math. (Brno). 



\bibitem{ber}M.S. Baouendi, P. Ebenfelt, L.P. Rothschild, {\it Real submanifolds in complex space and their mappings}, 
Princeton Mathematical Series, {\bf 47}. Princeton University Press, Princeton, NJ, 1999. xii+404 pp.

\bibitem{ba-ro-tr}  M.S. Baouendi, L.P. Rothschild, J.-M. Tr\' epreau,   
 \textit{ On the geometry of analytic discs attached to real manifolds}, 
J. Differential Geom.  \textbf{39}  (1994),   379-405.  


\bibitem{be-bl} F. Bertrand, L. Blanc-Centi, {\it Stationary holomorphic discs and finite jet determination problems},  
Math. Ann. {\bf 358} (2014), 477-509.

\bibitem{be-bl-me} F. Bertrand, L. Blanc-Centi, F. Meylan, {\it Stationary discs and finite jet determination for non-degenerate generic real submanifolds},
Adv. Math. {\bf 343} (2019), 910-934. 

\bibitem{be-de1} F. Bertrand, G. Della Sala, {\it  Stationary discs for smooth hypersurfaces of finite type and finite jet determination}, J. Geom. Anal. {\bf 25}  (2015), 2516-2545.


\bibitem{be-de2} F. Bertrand, G. Della Sala, {\it Riemann-Hilbert problems with constraints}, Proc. Amer. Math. Soc. {\bf 147} (2019),  2123-2131.


\bibitem{be-de-la} F. Bertrand, G. Della Sala, B. Lamel, {\it Jet determination of smooth CR automorphisms and generalized stationary discs}, Math. Z. {\bf 294}  (2020), 1611-1634.


\bibitem{be-me} F. Bertrand, F. Meylan, {\it Nondefective stationary discs and $2$-jet  determination in higher codimension},  J. Geom. Anal. {\bf 31} (2021), 6292-6306.

\bibitem{be-me2} F. Bertrand, F. Meylan, {\it  Explicit construction of stationary discs and its consequences for nondegenerate quadrics},  Ann. Sc. Norm. Super. Pisa Cl. Sci. (5) {\bf 24} (2023), 1875-1892.



\bibitem{bl} L. Blanc-Centi, {\it Stationary discs glued to a Levi non-degenerate hypersurface}, Trans. Amer. Math. Soc. {\bf 361} (2009), 3223-3239. 

\bibitem{bl-me} L. Blanc-Centi, F. Meylan,  {\it Chern-Moser operators and weighted jet determination problems in higher codimension},
Internat. J. Math. {\bf 33} (2022), no. 6, Paper No. 2250045, 15 pp.

\bibitem{BG} T. Bloom, I. Graham, {\it On "type"  conditions for generic real submanifolds of $\C^n$},  
Invent. Math. {\bf 40}  (1977), 217-243.





\bibitem{fo} F. Forstneri\v{c}, {\it Analytic disks with boundaries in a maximal real submanifold of $\C^2$}, Ann. Inst. Fourier {\bf 37} (1987), 1-44.


\bibitem{gl1}J. Globevnik, {\it Perturbation by analytic discs along maximal real submanifolds of $\C^N$},
 Math. Z. {\bf 217} (1994), 287-316.

\bibitem{gl2}J. Globevnik, {\it Perturbing analytic discs attached to maximal real submanifolds of $\C^N$}, 
Indag. Math.  {\bf 7} (1996), 37-46.



 
 
 
\bibitem{hi-ta}C.D. Hill, G. Taiani, {\it Families of analytic discs in $\C^n$ with boundaries on a prescribed CR submanifold},
 Ann. Scuola Norm. Sup. Pisa Cl. Sci. (4) {\bf 5} (1978), 327-380.
 
 
 \bibitem{hu} X. Huang, {\it A non-degeneracy property of extremal mappings and iterates of holomorphic 
self-mappings}, Ann. Scuola Norm. Sup. Pisa Cl. Sci. (4) {\bf 21} (1994), 399-419.

 

 \bibitem{LM} B. Lamel, N. Mir, {\it Two decades of finite jet determination of CR mappings,} Complex Anal. Synerg. {\bf 8} (2022), no. 4, 
 Paper No. 19, 15 pp.
 
 \bibitem{K} J.\ J. Kohn, \textit{Boundary behavior of
	$\delta$ on weakly pseudo-convex manifolds of dimension
	two}, J. Differential Geometry {\bf 6} (1972), 523-542.
 

  \bibitem{le} L. Lempert, {\it La m\'etrique de Kobayashi et la repr\'esentation des domaines sur la boule}, Bull. Soc. Math. France 
{\bf 109} (1981), 427-474.







\bibitem{tu0}  A. Tumanov,  {\it Extension of CR-functions into a wedge from a manifold of finite type}, (Russian) Mat. Sb. (N.S.) {\bf 136 (178)} (1988), 128-139; translation in Math. USSR-Sb. {\bf 64} (1989), 129-140.


\bibitem{tu} A. Tumanov, {\it Extremal discs and the regularity of CR mappings in higher codimension}, Amer. J. Math. 
{\bf 123} (2001), 445-473.

\bibitem{tu3} A. Tumanov, {\it Stationary Discs and finite jet determination for CR mappings in higher codimension},   Adv. Math. {\bf 371} (2020), 107254, 11 pp.

\bibitem{ve}N.P. Vekua, {\it Systems of singular integral equations}, Noordhoff, Groningen (1967) 216 pp.


\bibitem{we} S. M. Webster, {\it Analytic discs and the regularity of C-R mappings of real submanifolds in $\C^n$.} Complex analysis of several variables (Madison, Wis., 1982), 199-208, Proc. Sympos. Pure Math., 41, Amer. Math. Soc., Providence, RI, 1984.



\end{thebibliography}
\end{document}